\title{Galois and universal universal coverings of linear categories and fibre products}
\author{Claude Cibils, Mar\'\i a Julia Redondo and Andrea Solotar
\thanks{This work has been supported by the projects  UBACYTX212, PIP-CONICET 112- 200801-00487, PICT-2007-02182 and MATHAMSUD-NOCOMALRET.
The second and third  authors are  research members of
CONICET (Argentina). The third author is a Regular Associate of ICTP Associate Scheme.}}
\date{}
\newtheorem{thm}{Theorem}[section]
\newtheorem{cor}[thm]{Corollary}
\newtheorem{lem}[thm]{Lemma}
\newtheorem{pro}[thm]{Proposition}
\newtheorem{defi}[thm]{Definition}
\newtheorem{rem}[thm]{Remark}
\newtheorem{exa}[thm]{Example}
\let\oldqed\qed
\renewcommand\qed{\oldqed\par\bigskip}
\def\B{{\mathcal B}}
\def\C{{\mathcal C}}
\def\D{{\mathcal D}}
\def\F{{\mathcal F}}
\def\U{{\mathcal U}}
\def\aut{{\mathrm {Aut}}}
\def\lim{\mathop{\rm lim}\nolimits}
\def\st{\mathsf{St}}
\def\mor{\mathsf{Mor}}
\def\pr{\mathsf{pr}}
\begin{document}

\maketitle

\begin{abstract}
Let $k$ be a commutative ring.
We study the behaviour of coverings of $k$-categories through 
fibre products and find a criterion for a covering to be Galois or universal.
\end{abstract}

\small \noindent 2010 Mathematics Subject Classification :
16G20, 16W50, 18E10,
16S40, 16D90.

\noindent Keywords : fibre product, section, Galois covering, universal covering.


\section{\sf Introduction}

In this paper we study Galois coverings of $k$-categories and we look for conditions that ensure the existence of a universal covering.
With this purpose in mind we consider the fibre product of two Galois coverings. Differences with the usual algebraic topology setting appear,
since the fibre product of coverings of $k$-categories does not provide in
general a covering through the projection functor. Nevertheless, the fibre product of a Galois covering with a fully faithful functor is a
Galois covering.

The theory of Galois coverings is deeply related to the fundamental group and gradings of $k$-linear categories.
In \cite{CRS} we introduce the intrinsic fundamental
group $\pi_1(\B)$ attached to a $k$-linear category $\B$ as the automorphism group of the fibre functor which sends any Galois covering $F$
to its fibre $F^{-1}(b)$ over a chosen object $b$.  This fundamental group takes into account the
linear structure of the category $\B$. It is intrinsic in the sense
that it is not attached to a presentation of the category by generators and relations. However, the fundamental groups previously
constructed by R.
Mart\'\i nez-Villa and J.A. de la Pe\~{n}a in \cite{MP} and by K. Bongartz and P. Gabriel \cite{boga,ga} associated to a presentation
of the category
by a quiver and relations and depending on it \cite{asde,buca} are quotients of the intrinsic $\pi_1$
we introduce.
In case a universal covering $U: \U\rightarrow\B$
exists, the fundamental group that we have defined is isomorphic to the
automorphism group $\aut (U)$.

The concept of Galois covering is also related to gradings and to the smash product construction.
In \cite{CRS2} we make an explicit comparison between Galois coverings and smash coverings of a $k$-category $\B$. More precisely,
we provide an equivalence between the category of Galois coverings of $\B$ and its full subcategory
whose objects are the smash product coverings. We consider the fundamental group that has been defined in
\cite{CRS} using Galois coverings and show that we can restrict to smash coverings when computing the fundamental
group $\pi_1(\B)$. From \cite{CM} we get that each connected grading of $\B$ provides a Galois covering through the
smash product, and hence the fundamental group is derived from all the groups grading the linear category in a connected way.
This allows the computation of the fundamental groups of matrix algebras, triangular algebras,
group algebras and diagonal algebras, see \cite{CRS2}.
In particular this provides a confirmation of the fact that the fundamental group of an algebra takes into account the matrix structure,
in other words it is not a Morita invariant: matrices do not admit a universal covering since there exist at
least two connected components of the category of connected gradings.

Our main results here are: a characterization of Galois coverings, since $F$ is Galois if and only if the fibre product of $F$ with itself
is a trivial covering; and a characterization of universal coverings, since $F$ is universal if and only if the fibre product of $F$
with any Galois covering
provides a trivial covering. We finish this article with an application.

\section{\sf Preliminaries}

In this section we recall some definitions and results from \cite{CRS} that will be useful for this article.

Let $k$ be a commutative ring. A $k$-category is a small category
$\B$ such that each morphism set ${}_y\B_x$ from an object
$x\in\B_0$ to an object $y\in\B_0$ is a $k$-module, the composition of
morphisms is $k$-bilinear and the identity at each object is central
in its endomorphism ring. In particular each endomorphism set is a
$k$-algebra, and ${}_y\B_x$ is a ${}_y\B_y - {}_x\B_x$-bimodule.

Each $k$-algebra $A$ provides a single object $k$-category $\B_A$ with
endomorphism ring $A$. Of course the structure of $A$ can be
described more precisely by choosing a finite set $E$ of orthogonal
idempotents of $A$, such that $\sum_{e\in A}e=1$, the
$k$-category $\B_{A,E}$ has set of objects $E$ and morphisms from
$e$ to $f$ the $k$-module $fAe$. Note that $\B_{A,\{1\}}=\B_A$.

\begin{defi} The star $\st_{b}\B$ of a $k$-category $\B$ at an
object $b$ is the direct sum of all the morphisms with source or target $b$ :
\[\st_{b}\B = \left(\bigoplus_{y\in\B_0} \ {}_y\B{_{b}}\right)\ \oplus
\ \left(\bigoplus_{y\in\B_0} \ {}_{b}\B{_y}\right).\] Note that
this $k$-module counts twice the endomorphism algebra at $b$.
\end{defi}

\begin{defi}\label{covering}
 Let $\C$ and $\B$ be $k$-categories. A $k$-functor
$F:\C\rightarrow\B$ is a \emph{covering} of $\B$ if it is surjective on
objects and if $F$ induces $k$-isomorphisms between the
corresponding stars. More precisely, for each $b\in\B_0$ and each
$x$ in the non-empty fibre $F^{-1}(b)$, the map
\[F_{b}^x:\st_x\C\longrightarrow\st_{b}\B.\]
induced by $F$ is a $k$-isomorphism.

\end{defi}

\begin{rem} Each star is the direct sum of the source star $\st^-_{b}\B =
\bigoplus_{y\in\B_0}{}_y\B{_{b}}$ and the target
star $\st^+_{b}\B=\bigoplus_{y\in\B_0} \
{}_{b}\B{_y}$. Since the  $\st^-$ and $\st^+$ components are stable under $F$,
the condition of the definition is equivalent to the
requirement that the corresponding target and source stars are
isomorphic through $F$.\\
Moreover this splitting goes further: the restriction of $F$ to
$\bigoplus_{y\in F^{-1}(c)} {}_y\C_x$
is $k$-isomorphic to the corresponding
$k$-module ${}_{c}\B_{b}$. The same holds
with respect to the target star and morphisms starting at all
objects in a single fibre.
\end{rem}

\begin{rem}
The previous facts show that Definition \ref{covering} coincides with
the one given
by K. Bongartz and P. Gabriel in \cite{boga}.
\end{rem}

\begin{defi}
Given $k$-categories $\B, \C, \D$, the set $\mor(F,G)$ from a covering
$F:\C\rightarrow\B$ to a
covering $G:\D\rightarrow\B$ is the set of
pairs of $k$-linear functors $(H,J)$
where $H: \C  \to \D$, $J: \B \to \B$ such that $GH=JF$, with $J$ an isomorphism. \\
We will consider within the group of automorphisms of a covering $F:\C \rightarrow \B$, the subgroup
$\aut_1 (F)$ of invertible endofunctors $H$ of $\C$ such that $FH=F$.
\end{defi}

We next recall the definition of a connected $k$-category. In fact,
given a covering, we shall be interested in each connected component.

We use the following notation: given a morphism $f$, its source object is denoted $s(f)$
and $t(f)$  is its target object. We will also make use of walks. For this purpose we consider the set of formal pairs $(f,\epsilon)$ as
``morphisms with sign'', where $f$ is a morphism in $\B$ and $\epsilon \in \{-1,1\}$.
We extend source and target maps to this set as follows:
\begin{center}
$s(f,1)=s(f)$, $s(f,-1)=t(f)$, $t(f,1)=t(f)$, $t(f,-1)=s(f)$.
\end{center}

\begin{defi}
Let $\B$ be a $k$-category. A non-zero \emph{walk} in $\B$ is a sequence of non-zero morphisms
with signs $(f_n,\epsilon_n) \dots (f_1,\epsilon_1)$ such that $s(f_{i+1}, \epsilon_{i+1})= t(f_{i}, \epsilon_{i})$.
We say that this walk goes from  $s(f_1, \epsilon_1)$ to $t(f_n, \epsilon_n)$. \\
A $k$-category $\B$ is \emph{connected} if any two objects $b$
and $c$ of $\B$ can be joined by a non-zero walk.
\end{defi}

\begin{pro}\label{connected} \cite[Proposition 2.8]{CRS}
Let $F:\C\longrightarrow\B$ be a covering of $k$-categories. If
$\C$ is connected, then $\B$ is connected.
\end{pro}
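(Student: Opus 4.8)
The plan is to show that connectedness transfers along a covering functor. We are given a covering $F:\C\to\B$ with $\C$ connected, and we must show that any two objects $b,c\in\B_0$ can be joined by a non-zero walk in $\B$. The key observation is that a covering functor is surjective on objects (by Definition \ref{covering}), so we may choose objects $x,y\in\C_0$ lying in the fibres $F^{-1}(b)$ and $F^{-1}(c)$ respectively. Since $\C$ is connected, there exists a non-zero walk in $\C$ joining $x$ to $y$.

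Next I would push this walk forward through $F$. Given the non-zero walk $(g_n,\epsilon_n)\cdots(g_1,\epsilon_1)$ in $\C$ from $x$ to $y$, I apply $F$ to each morphism to obtain the sequence $(Fg_n,\epsilon_n)\cdots(Fg_1,\epsilon_1)$ in $\B$. Since $F$ is a $k$-functor, it respects source and target maps, so the compatibility condition $s(g_{i+1},\epsilon_{i+1})=t(g_i,\epsilon_i)$ is preserved after applying $F$, yielding $s(Fg_{i+1},\epsilon_{i+1})=t(Fg_i,\epsilon_i)$. Thus the image is a walk in $\B$ from $F(x)=b$ to $F(y)=c$.

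The one point requiring care is that each $Fg_i$ must be a \emph{non-zero} morphism in $\B$, since the definition of a walk demands non-zero morphisms. This is where the covering hypothesis is essential rather than just functoriality: because $F$ induces $k$-\emph{isomorphisms} between the corresponding stars $\st_{s(g_i)}\C\to\st_{F(s(g_i))}\B$, the restriction of $F$ to the relevant star is injective, so a non-zero morphism $g_i$ (which lives inside such a star) cannot be sent to zero. I expect this injectivity step to be the main point to verify, since a mere functor could collapse morphisms, but the star-isomorphism property of a covering rules this out. Assembling these observations, the image walk is a non-zero walk in $\B$ from $b$ to $c$, proving that $\B$ is connected.
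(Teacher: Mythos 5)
Your proof is correct and follows essentially the same route as the argument in \cite{CRS}: lift the two objects via surjectivity on objects, connect the lifts by a non-zero walk in $\C$, and push the walk down through $F$, using the injectivity of the star isomorphisms to guarantee that each image morphism remains non-zero. You correctly identified that this last injectivity step is exactly where the covering hypothesis (rather than mere functoriality) is needed.
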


The following two statements correspond respectively to \cite[Proposition 2.9]{CRS} and \cite[Corollary 2.10]{CRS}.

\begin{pro}\cite{le}\label{equal}
Let $F:\C\longrightarrow\B$ and $G:\D\longrightarrow\B$ be coverings of $k$-linear
categories. Assume $\C$ is connected.
Two morphisms $(H_1,J)$, $(H_2,J)$ from $F$ to $G$ such that $H_1$ and $H_2$ coincide on some object are equal.
\end{pro}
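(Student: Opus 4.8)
The plan is to mimic the classical unique lifting argument from covering space theory, replacing ``agreement of two lifts at a point and its propagation along paths'' by agreement of the two functors at an object and its propagation along nonzero walks, where the propagation is controlled by the injectivity of $G$ on stars. Since $(H_1,J)$ and $(H_2,J)$ share the same $J$, we have $GH_1=JF=GH_2$; in particular $G(H_1(x))=G(H_2(x))$ and $G(H_1(f))=G(H_2(f))$ for every object $x$ and every morphism $f$ of $\C$. Let $S\subseteq\C_0$ be the set of objects at which $H_1$ and $H_2$ coincide; by hypothesis $S\neq\emptyset$.

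First I would establish the local propagation step. Suppose $x\in S$, write $d=H_1(x)=H_2(x)$ and $b'=G(d)$, and let $f\in{}_y\C_x$ be a nonzero morphism with source $x$. Both $H_1(f)$ and $H_2(f)$ lie in the source star $\st^-_d\D$, and they have the same image $J(F(f))$ under the isomorphism $G^d_{b'}$ restricted to $\st^-_d\D$; hence $H_1(f)=H_2(f)$. The key point is that this morphism is nonzero: since $f\neq 0$ and $F$ restricts to an isomorphism on $\st^-_x\C$ we get $F(f)\neq 0$, and as $J$ is an isomorphism $J(F(f))=G(H_1(f))\neq 0$, so $H_1(f)\neq 0$. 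Now a nonzero element of the direct sum $\st^-_d\D=\bigoplus_w {}_w\D_d$ lies in a single summand, while $H_1(f)\in{}_{H_1(y)}\D_d$ and $H_2(f)\in{}_{H_2(y)}\D_d$; equality of these nonzero elements forces $H_1(y)=H_2(y)$, that is, $y\in S$. The symmetric argument using the target star $\st^+_d\D$ handles a nonzero morphism with target $x$. Thus $S$ is stable under passing along a nonzero morphism with sign, in either direction.

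Next, since $\C$ is connected, any object can be joined to a fixed object of $S$ by a nonzero walk, and iterating the local step along the successive steps of the walk shows $S=\C_0$: the two functors agree on all objects. Finally, once $H_1$ and $H_2$ agree on objects, I would recover agreement on all morphisms. A zero morphism is preserved by $k$-linearity, while for a nonzero $h\in{}_w\C_z$ the injectivity of $G$ on $\st^-_{H_1(z)}\D$, together with $G(H_1(h))=G(H_2(h))$, gives $H_1(h)=H_2(h)$. Hence $H_1=H_2$.

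I expect the main obstacle to be precisely the linear subtlety that is absent in the topological case: concluding that the targets $H_1(y)$ and $H_2(y)$ coincide. This does not follow formally from $H_1(f)=H_2(f)$ in a $k$-category, where morphisms between different pairs of objects live in different summands of the star; one genuinely needs the non-vanishing of $H_1(f)$, which is why the injectivity of the coverings on stars and the invertibility of $J$ must be invoked together at this step.
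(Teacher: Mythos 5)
Your proof is correct and is essentially the standard unique-lifting argument; the paper itself states this proposition without proof, citing \cite[Proposition 2.9]{CRS} and Le Meur \cite{le}, and the argument in those sources is the same propagation of agreement along nonzero walks via injectivity of the star maps. You also correctly isolated the one genuinely linear point: it is the non-vanishing of $H_1(f)$ (obtained from injectivity of $F$ on stars plus invertibility of $J$) that forces $H_1(y)=H_2(y)$, since a nonzero element of $\st^-_d\D$ lies in a single summand, whereas the bare equality $H_1(f)=H_2(f)$ in the star would otherwise be compatible with both images being zero in distinct summands.
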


\begin{cor}
Let $F: \C \to \B$ be a  connected covering of a $k$-linear category $\B$.  The group $\aut_1(F)=\{ (H,1): F \to F \mid H \ \mbox{an isomorphism of}\ \C\}$ acts freely on each fibre.
\end{cor}

Next we recall the definition of a Galois covering:

\begin{defi}
A covering $F: \C\longrightarrow\B$ of $k$-categories is a
\emph{Galois covering} if $\C$ is connected and if $\aut_1(F)$ acts
transitively on some fibre.
\end{defi}

Of course, the condition "on some fibre" actually means that the automorphism group acts
transitively at every fibre whenever it acts transitively on a
particular one, as it is proven in \cite{CRS, le, le2}.

\begin{pro} Let
$F:\C\longrightarrow\B$ be a Galois covering. Then $\aut_1 (F)$ acts
transitively on each fibre.
\end{pro}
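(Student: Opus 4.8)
The plan is to fix an object $b_0$ on whose fibre $\aut_1(F)$ acts transitively, and to show that for an arbitrary object $c$ the fibre $F^{-1}(c)$ is isomorphic to $F^{-1}(b_0)$ as an $\aut_1(F)$-set. Since $\C$ is connected, Proposition \ref{connected} ensures that $\B$ is connected, so there is a non-zero walk $w$ from $c$ to $b_0$. The idea is to lift $w$ through $F$ so as to obtain an $\aut_1(F)$-equivariant bijection $\lambda_w : F^{-1}(c)\to F^{-1}(b_0)$; transitivity on the target fibre then forces transitivity on the source fibre.

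First I would establish unique lifting of morphisms with sign. Given $x\in F^{-1}(b)$ and a non-zero morphism $g$ with $s(g)=b$, the isomorphism $F_b^x$ restricts to an isomorphism of source stars $\st^-_x\C\to\st^-_b\B$, so there is a unique non-zero $\tilde g$ in $\C$ with $s(\tilde g)=x$ and $F(\tilde g)=g$, and its target lies in $F^{-1}(t(g))$. Dually, using $\st^+$, a morphism with $t(g)=b$ lifts uniquely to $\tilde g$ with $t(\tilde g)=x$. Applying this step by step along $w$, starting from a chosen $x\in F^{-1}(c)$, produces a unique lifted walk whose endpoint I denote $\lambda_w(x)\in F^{-1}(b_0)$. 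Lifting the reversed walk from that endpoint recovers $x$ by the same uniqueness, so $\lambda_w$ is a bijection with inverse obtained from the reversed walk.

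The key point is equivariance. For $H\in\aut_1(F)$ one has $FH=F$, so applying $H$ to the lift of $w$ starting at $x$ yields a walk in $\C$ which starts at $H(x)$ and whose image under $F$ is again $w$; by uniqueness it is exactly the lift of $w$ starting at $H(x)$, whence $\lambda_w(H(x))=H(\lambda_w(x))$. Thus $\lambda_w$ intertwines the two actions of $\aut_1(F)$. Given $x,x'\in F^{-1}(c)$, transitivity on $F^{-1}(b_0)$ provides an $H$ with $H(\lambda_w(x))=\lambda_w(x')$; equivariance rewrites this as $\lambda_w(H(x))=\lambda_w(x')$, and injectivity of $\lambda_w$ yields $H(x)=x'$. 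Hence $\aut_1(F)$ acts transitively on $F^{-1}(c)$.

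The main obstacle is purely in setting up the lifting carefully: one has to keep track of the sign conventions so that at each step the morphism is lifted in the correct star ($\st^-$ when the walk traverses it forwards, $\st^+$ when backwards), and to check that each lifted step, and hence the whole lifted walk, depends only on the starting object, so that the uniqueness needed for both bijectivity and equivariance is genuinely available. Once the unique walk-lifting is in place, the equivariance identity $\lambda_w\circ H=H\circ\lambda_w$ follows formally from $FH=F$, and the conclusion is immediate.
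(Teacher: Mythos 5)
Your argument has a genuine gap, and it sits exactly where you anticipated trouble: the unique lifting step fails for coverings of $k$-linear categories. The covering condition gives an isomorphism of source stars $\st^-_x\C=\bigoplus_{y\in\C_0}\,{}_y\C_x\longrightarrow \st^-_b\B$, so a non-zero morphism $g$ with $s(g)=b$ has a unique preimage \emph{in the source star}; but that preimage is an element of a direct sum, hence in general a sum $\sum_i g_i$ of morphisms $g_i\in{}_{y_i}\C_x$ ending at several distinct objects $y_i\in F^{-1}(t(g))$, not a single morphism $\tilde g$. A single-morphism lift need not even exist. The paper's own Example \ref{ejemplo} is a counterexample: for the Galois covering $F_2$ (defined by $F_2(a_i)=a+cb$), the morphism $a\in{}_s\B_t$ and the object $t_1\in F_2^{-1}(t)$, the unique star preimage of $a$ is $a_1-c_1b_1$, whose two components end at the distinct objects $s_2$ and $s_1$; no morphism of $\C$ with source $t_1$ maps to $a$, since the only candidates are multiples of $a_1$ and of $c_1b_1$, whose images are multiples of $a+cb$ and of $cb$. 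Consequently your map $\lambda_w$ is simply not defined, and everything built on it (bijectivity, equivariance, transfer of transitivity) collapses. This failure of unique path lifting is not a technicality that can be tidied up: it is the basic way coverings of linear categories differ from topological coverings, and it is precisely the phenomenon exploited in Example \ref{ejemplo2}, where the fibre product of $F_1$ and $F_2$ fails to be a covering because lifts of $a$ through the two coverings do not match up object by object.

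Note that the paper itself does not prove this proposition; it refers to \cite{CRS,le,le2}. Any correct argument has to work with the full star preimages, i.e.\ with elements of $\bigoplus_{y\in F^{-1}(c)}\,{}_y\C_x$, rather than with single lifted morphisms. Part of your strategy does survive in that setting: for $H\in\aut_1(F)$, uniqueness of star preimages does give that $H$ carries the preimage of $g$ at $x$ to the preimage of $g$ at $H(x)$, so the \emph{support} of the preimage (a subset of the fibre $F^{-1}(t(g))$) is moved equivariantly. But since these supports are subsets rather than single objects, transitivity cannot be transported along a walk by a bijection of fibres as you propose; bridging from ``equivariant families of subsets'' to ``transitivity on the whole fibre'' is exactly the additional work your write-up is missing.
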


The following structure theorem gives an explicit description of Galois coverings.

\begin{thm}\label{structure} \cite[Theorem 3.7]{CRS}.
Let  $F:\C\longrightarrow\B$ be a Galois covering. Then there exists a unique
isomorphism of categories $F':\C/\aut_1(F)\longrightarrow \B$ such that $F'P=F$, where
$P:\C\longrightarrow\C/\aut_1 (F)$ is the Galois covering given by the categorical quotient.
\end{thm}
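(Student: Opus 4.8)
The plan is to build $F'$ from the universal property of the categorical quotient and then to upgrade it to an isomorphism by combining the covering property of both $F$ and $P$ with the transitivity encoded in the Galois hypothesis. Write $G=\aut_1(F)$. By definition every $H\in G$ satisfies $FH=F$, so $F$ is invariant under the action of $G$ and, in particular, is constant on the $G$-orbits of objects. The quotient functor $P:\C\to\C/G$ is universal among the $k$-functors out of $\C$ that are invariant in this sense, so applying this universal property to $F$ produces a unique $k$-functor $F':\C/G\to\B$ with $F'P=F$. This disposes of existence and uniqueness, and it remains only to prove that $F'$ is an isomorphism of $k$-categories, that is, bijective on objects and fully faithful.

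First I would settle the objects. By construction $F'$ sends the orbit of $x$ to $F(x)$, which is well defined exactly because $F$ is constant on orbits. Surjectivity is immediate, since $F$ is a covering and hence surjective on objects while $P$ is surjective on objects. For injectivity, suppose two orbits $Gx$ and $Gy$ satisfy $F(x)=F(y)=b$; then $x$ and $y$ lie in the common fibre $F^{-1}(b)$, and here the Galois hypothesis intervenes: $G$ acts transitively on this fibre, so there is $H\in G$ with $Hx=y$, whence $Gx=Gy$. Thus $F'$ is a bijection on objects.

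Next I would show that $F'$ induces isomorphisms between stars. Every object of $\C/G$ has the form $Px$ for some $x\in\C_0$; writing $b=F(x)$, the equality $F=F'P$ gives, at the level of stars,
\[
F_b^x \;=\; (F')_b^{Px}\circ P_{Px}^x .
\]
Since $P$ and $F$ are coverings, both $P_{Px}^x$ and $F_b^x$ are $k$-isomorphisms, and therefore so is $(F')_b^{Px}$. As $Px$ ranges over all objects of $\C/G$, the functor $F'$ induces a $k$-isomorphism on every star.

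Finally I would combine these two facts. The hard part will be to pass from the star-isomorphism property of $F'$ to genuine isomorphisms of morphism spaces, and this is precisely where the injectivity on objects—hence the transitivity assumption—is indispensable. Fixing $Px$, the source star $\bigoplus_{\bar y}{}_{\bar y}(\C/G)_{Px}$ maps, summand by summand, into the pieces ${}_{F'\bar y}\B_{b}$ of the source star of $\B$ at $b$; because $F'$ is a bijection on objects the assignment $\bar y\mapsto F'\bar y$ identifies the two indexing sets, so a diagonal map between the two direct sums is an isomorphism if and only if each component is. Consequently the star isomorphism forces every component ${}_{\bar y}(\C/G)_{Px}\to{}_{F'\bar y}\B_{b}$ to be a $k$-isomorphism. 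Letting $Px$ and $\bar y$ vary over all objects shows that $F'$ is fully faithful, and together with the bijectivity on objects this establishes that $F'$ is an isomorphism of $k$-categories, as required.
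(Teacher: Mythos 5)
There is no in-paper proof to compare against: the paper states Theorem \ref{structure} without proof, recalling it from \cite{CRS}, so your proposal can only be judged on its own merits --- and it is correct and complete. Existence and uniqueness of $F'$ do follow from the universal property of the categorical quotient once one observes that $FH=F$ for every $H\in\aut_1(F)$ (uniqueness even among arbitrary $k$-functors, since $P$ is surjective on objects and morphisms). Your injectivity argument on objects correctly invokes transitivity of $\aut_1(F)$ on an \emph{arbitrary} fibre, which is legitimate because the paper recalls, in the proposition following the definition of Galois covering, that transitivity on one fibre implies transitivity on every fibre. The factorization $F_b^x=(F')_b^{Px}\circ P_{Px}^x$ of star maps, combined with $F$ and $P$ being coverings, does force $(F')_b^{Px}$ to be a $k$-isomorphism; and your diagonal-decomposition step is exactly the right way to upgrade star isomorphisms to full faithfulness: since $F'$ is bijective on objects, the star map is a direct sum of component maps taken along a bijection of index sets, and such a map is an isomorphism if and only if each component is. Two inputs you take for granted are fair: that $P$ is itself a covering is asserted in the statement (``the Galois covering given by the categorical quotient''), and the factorization property of $\C/\aut_1(F)$ is the defining feature of a categorical quotient; the only routine verification left implicit is that $F'$, defined on classes of morphisms by $F'[f]=Ff$, is well defined and functorial, which again reduces to the identity $FH=F$.
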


Examples of coverings that are not Galois can be found in
\cite{CRS, le,le2}.

\begin{defi}
A \emph{universal covering} $U:\U \to \B$ is a Galois covering of $\B$ such that for any Galois covering $F:\C \to \B$,
and for any $u\in \U_0$, $c\in \C_0$ with $U(u)=F(c)$, there exists a unique morphism $(H,1)$ from $U$ to $F$ such that $H(u)=c$.
\end{defi}

In case of existence, a universal covering is unique up to isomorphisms of
Galois coverings. In general universal coverings do not exist, as the following Example shows.

\begin{exa}\label{ejemplo}
Let $k$ be a field and consider the $k$-categories

\[ \xymatrix{
&\vdots & & \vdots   & & & \\
&& & s_2  & & &  && u   \ar[dr]^c &\\
\C :  & t_1 \ar[urr]^{a_1} \ar[r]_{b_1} & u_1 \ar[r]^{c_1} & s_1 & , & &  \B :  & t \ar[rr]^{a} \ar[ur]^{b} &  & s & ,\\
&t_0 \ar[urr]^{a_0} \ar[r]_{b_0} & u_0 \ar[r]^{c_0} & s_0 & & & \\
& \vdots & & \vdots & & &
}\]
\medskip
and $F_1, F_2: \C \to \B$ Galois coverings of $\B$ given by
$F_1(a_i)=a$, $F_2(a_i)= a + cb$, $F_1(b_i)= F_2(b_i)=b$ and $F_1(c_i)=F_2(c_i)=c$.

Since $\C$ is simply connected, it admits no proper Galois covering , see \cite{AS, MP}, and there is no morphism between $F_1$ and $F_2$. Hence $\B$ admits no universal covering. \end{exa}


\section{\sf Fibre product of coverings}

The fibre product of coverings that we define below will be useful in order to provide a
criterion for a covering to be Galois and to be universal.

\begin{defi}
Let $F:\C\longrightarrow\B$ and $G:\D\longrightarrow\B$ be $k$-functors of
$k$-categories. The \emph{fibre product} $\C\times_{\B}\D$ is the $k$-category defined as follows:
objects are pairs $(x,y)\in\C_0\times\D_0$ such that $F(x)=G(y)$; the set of morphisms from
$(x,y)$ to $(x',y')$ is the $k$-submodule of ${}_{x'}\C_x\oplus {}_{y'}\D_y$ given by
pairs of morphisms $(\varphi,\psi)$ verifying $F\varphi=G\psi$. Composition of morphisms
is defined componentwise.
\end{defi}

\begin{rem}
The fibre product as defined above is in fact the categorical fibre product in
the category of $k$-linear categories.  In particular, the following diagram is
commutative

\[ \xymatrix{
\C\times_{\B}\D \ar[r]^{{\sf pr}_{\C}} \ar[d]_{{\sf pr}_{\D}}  &  \C \ar[d]^F \\
\D  \ar[r]^G & \B }\]

\end{rem}

\begin{thm}\label{fullyfaithful}
With the same notations as above, assume that $F$ is a covering and that $G$ is fully
faithful. Then $\mathsf{pr}_\D$ is a covering of $\D$.
\end{thm}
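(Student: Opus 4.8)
The plan is to verify the two defining conditions of a covering for $\mathsf{pr}_\D$: surjectivity on objects, and that $\mathsf{pr}_\D$ induces $k$-isomorphisms of stars. Surjectivity is immediate: given $y\in\D_0$, the object $G(y)\in\B_0$ lies in the image of $F$ since $F$ is a covering, so there is $x\in\C_0$ with $F(x)=G(y)$; then $(x,y)$ is an object of $\C\times_\B\D$ with $\mathsf{pr}_\D(x,y)=y$. For the star condition, fix an object $(x,y)$ and set $b=F(x)=G(y)$. By the remark following Definition \ref{covering} each star splits into its source and target parts, which are stable under any $k$-functor, so it suffices to prove that $\mathsf{pr}_\D$ restricts to a $k$-isomorphism $\st^-_{(x,y)}(\C\times_\B\D)\to\st^-_y\D$ on source stars; the target stars are treated symmetrically, by applying the same argument to the opposite categories, for which $F^{\op}$ is still a covering and $G^{\op}$ is still fully faithful.

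Before the main computation I would record the two inputs. Since $G$ is fully faithful and $k$-linear, it induces $k$-isomorphisms $G:{}_{y'}\D_y\xrightarrow{\ \sim\ }{}_{G(y')}\B_{G(y)}$ for all objects. Since $F$ is a covering, the refinement of the star isomorphism noted in the remark gives, for each $c\in\B_0$, a $k$-isomorphism $F:\bigoplus_{x'\in F^{-1}(c)}{}_{x'}\C_x\xrightarrow{\ \sim\ }{}_c\B_b$. The idea is to feed these two isomorphisms into the description of the source star of the fibre product.

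Next I would describe $\st^-_{(x,y)}(\C\times_\B\D)=\bigoplus_{(x',y')}M_{(x',y')}$, where $M_{(x',y')}$ is the $k$-module of pairs $(\varphi,\psi)\in{}_{x'}\C_x\oplus{}_{y'}\D_y$ with $F\varphi=G\psi$, and $\mathsf{pr}_\D$ sends $(\varphi,\psi)\mapsto\psi$. The key observation is that the first projection $M_{(x',y')}\to{}_{x'}\C_x$ is a $k$-isomorphism: given $\varphi\in{}_{x'}\C_x$ we have $F\varphi\in{}_{G(y')}\B_{G(y)}$ because $F(x')=G(y')$, and the full faithfulness of $G$ provides the unique $\psi=G^{-1}(F\varphi)$ completing $\varphi$ to an element of $M_{(x',y')}$. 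Fixing $y'$ and summing over $x'\in F^{-1}(G(y'))$, I would then assemble the chain
\[
\bigoplus_{x'\in F^{-1}(G(y'))}M_{(x',y')}\ \cong\ \bigoplus_{x'\in F^{-1}(G(y'))}{}_{x'}\C_x\ \xrightarrow{\ F\ }\ {}_{G(y')}\B_{b}\ \xrightarrow{\ G^{-1}\ }\ {}_{y'}\D_y,
\]
whose composite is exactly $\mathsf{pr}_\D$ read through these identifications, hence a $k$-isomorphism. Taking the direct sum over $y'\in\D_0$ yields the desired source-star isomorphism.

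The step I expect to be the main obstacle — and the place where both fullness and faithfulness of $G$ are genuinely used — is the surjectivity built into the central isomorphism: a single morphism $\psi\colon y\to y'$ of $\D$ need not lift to a single morphism of $\C$, but rather spreads, via $F^{-1}$, over the whole fibre $F^{-1}(G(y'))$, and fullness of $G$ is what allows each piece $F\varphi_{x'}$ to be pulled back to a morphism of $\D$, so that every summand $(\varphi_{x'},\psi_{x'})$ is a legitimate morphism of the fibre product; faithfulness of $G$ is what forces the completion $\psi=G^{-1}(F\varphi)$ to be unique, giving injectivity. Keeping track of these direct-sum decompositions correctly, rather than any single calculation, is the delicate point.
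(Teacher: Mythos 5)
Your proof is correct, and it reaches the conclusion by a genuinely different organization than the paper's. The paper verifies directly that the star map $(\pr_\D)_{d}^{(c,d)}$ is surjective and injective by element chasing: for surjectivity, $Gf$ is lifted through the star isomorphism of $F$ to a sum $\sum g_i$, fullness of $G$ produces $h_i$ with $Gh_i=Fg_i$, and faithfulness gives $\sum h_i=f$; for injectivity, the index set of a vanishing sum is partitioned according to the target objects $y_i$, and the injectivity of the star map of $F$ together with faithfulness of $G$ finishes each part. You instead factor $\pr_\D$, block by block, as a composite of three $k$-isomorphisms. Your key observation --- which the paper never states explicitly --- is that the \emph{other} projection $\pr_\C\colon M_{(x',y')}\to{}_{x'}\C_x$ is a $k$-isomorphism on each hom-module of the fibre product, precisely because $G$ is fully faithful; combining this with the fibre-wise refinement of the star isomorphism of $F$ (the Remark following Definition \ref{covering}, which you correctly invoke) and the hom-isomorphisms induced by $G$ yields the isomorphism on each $y'$-block in one stroke. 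What your factorization buys is that surjectivity and injectivity come simultaneously, and the paper's partition-of-indices argument is avoided entirely; the cost is only the bookkeeping of the canonical decomposition of the star of the fibre product by target objects grouped by second coordinate, which you handle correctly, since $\pr_\D$ visibly respects that grading. The underlying ingredients (refined star isomorphism of $F$, fullness of $G$ for existence of lifts, faithfulness of $G$ for uniqueness) are identical in both proofs; only the packaging differs. Your reduction to source stars, with target stars handled by passing to opposite categories, is also legitimate in view of the paper's remark that the covering condition may be checked on source and target stars separately.
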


\begin{proof}
Let $d$ be an object of $\D$. The $\mathsf{pr}_\D$-fibre of $d$ is the set of pairs
$(c, d)$ such that $Fc=Gd$, that is, the set of pairs $(c, d)$
such that $c$ is in the $F$-fibre of $Gd$. \\
In order to prove that $\left({\mathsf{pr}_\D}\right)_{d}^{(c, d)}$ is an
isomorphism between stars, we first show that it is surjective. Let $f\in
{}_{y}\D_{d}$, consider $Gf\in\st_{Gd}\B$ and let $\sum g_i$ be the unique sum of
morphisms in $\C$ obtained through the isomorphism of stars between $\st_{c}\C$ and
$\st_{Fc}\B = \st_{Gd}\B $. Of course each $g_i$ starts at the common object $c$ and it ends at
objects in the $F$-fibre of $Gy$. We have that $\sum Fg_i = Gf$. Since $Fg_i$ is a
morphism from $Gd$ to $Gy$ in $\B$ and $G$ is full, there is one morphism $h_i$ from
$d$ to $y$ in $\D$ such that $Gh_i=Fg_i$. This provides morphisms $(h_i,g_i)$ of the
fibre product. Moreover $G$ is faithful, $G\sum h_i=\sum Fg_i=Gf$ hence $\sum h_i = f$
and $\sum(g_i,h_i)$ is the required morphism in
the star $\st_{(c,d)} \C \times _{\B} \D$ lying above $f$.\\
In order to prove that $\left({\mathsf{pr}_\D}\right)_{d}^{(c, d)}$ is a
monomorphism, let $\sum (g_i,h_i) \in \st_{(c,d)} \C \times _{\B} \D$ be such that
$$0= \left({\mathsf{pr}_\D}\right)_{d}^{(c, d)}\sum (g_i,h_i) = \sum h_i \in \bigoplus_{i } {}_{y_i}\D_{d}.$$ Fix $i_0$ in the
set of indices of the last direct sum and consider the subset of indices defined by $J=\{
i : y_i=y_{i_0}\}$. Now $0= \sum_J h_i \in {}_{y_{i_{0}}}\D_{d}$ and $F (\sum_J g_i) =
\sum_J Gh_i=0$. Since $F_{F(c)}^{c} : \st_{c} \C  \to \st_{Fc} \B$ is an
isomorphism, we have that $\sum_J g_i=0$. Observe that each $g_i$ lies in a different
direct summand of $\st_{c} \C$.  Hence $g_i=0$ for all $i \in J$, $0=Fg_i=Gh_i$ and we
have that $h_i=0$ for all $i \in J$ because $G$ is faithful. Finally note that the
subsets $J=J(i_0)$ are a partition of the set of indices considered.
\end{proof}

We provide an example of a fibre product of coverings such that its projections are not coverings.

\begin{exa}\label{ejemplo2}
Consider coverings $F_1$ and $F_2$ as in Example \ref{ejemplo}.
Its fibre product $\C \times_\B \C$ has set
of objects isomorphic to the set $\C_0 \times \mathbb{Z}$ by the bijection sending $(x_i, x_{i+k})$ to $(x_i, k)$ for $x=s,t,u$.
Now the vector spaces of morphisms from $(t_i, t_j)$ to $(s_{i+1}, s_{j+1})$ are zero.
So, even if both projections are surjective on objects,
the respective maps between stars are not surjective.
\end{exa}

Next  we prove several results concerning fibre products of Galois coverings.
The first one gives a method to produce Galois coverings in an iterative way.
After recalling the definition of a trivial Galois covering we give a criterion useful to check if a given covering
is trivial.

\begin{pro}\label{square} Let $F:\C\longrightarrow\B$ be a Galois covering
of $k$-linear categories. The fibre product of $F$ by itself is a covering of $\C$,
through the projection functor $\mathsf{pr}_\C$.
\end{pro}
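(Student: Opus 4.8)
The plan is to verify directly that $\mathsf{pr}_\C:\C\times_\B\C\to\C$ satisfies the two requirements of Definition \ref{covering}: surjectivity on objects, and that the induced maps on stars are $k$-isomorphisms. Note that Theorem \ref{fullyfaithful} does not apply directly here, since a Galois covering $F$ need not be fully faithful, so instead I argue from the group action. Surjectivity on objects is immediate, because for every $c\in\C_0$ the diagonal pair $(c,c)$ is an object of $\C\times_\B\C$ with $\mathsf{pr}_\C(c,c)=c$. For the star condition I fix an object $(x,y)$ of $\C\times_\B\C$, so that $F(x)=F(y)$, and study $\left(\mathsf{pr}_\C\right)_x^{(x,y)}:\st_{(x,y)}(\C\times_\B\C)\to\st_x\C$, which sends a pair $(\varphi,\psi)$ to $\varphi$. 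By the Remark following Definition \ref{covering} it is enough to treat the source star and the target star separately, and the two cases are symmetric, so I concentrate on outgoing morphisms.

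This is where the Galois hypothesis enters. Since $F$ is Galois, $G=\aut_1(F)$ acts transitively on the fibre $F^{-1}(F(x))$, and by the Corollary on free actions this action is free; hence there is a unique $g\in G$ with $gx=y$. Using the automorphism $g$ of $\C$, I propose the candidate inverse $\theta:\st^+_x\C\to\st^+_{(x,y)}(\C\times_\B\C)$ determined on a morphism $\varphi\colon x\to x'$ by $\theta(\varphi)=(\varphi,g\varphi)$. This is well defined: the pair $(x',gx')$ lies in $\C\times_\B\C$, the morphism $g\varphi$ starts at $gx=y$, and $F(g\varphi)=(Fg)\varphi=F\varphi$ because $g\in\aut_1(F)$, so $(\varphi,g\varphi)$ is genuinely a morphism of the fibre product out of $(x,y)$. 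Extending $\theta$ $k$-linearly over the direct summands of $\st^+_x\C$ yields a $k$-linear map, and $\mathsf{pr}_\C\circ\theta=\mathrm{id}$ holds by construction.

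The remaining, and main, point is that $\theta\circ\mathsf{pr}_\C=\mathrm{id}$, i.e. that the projection is injective on the source star: given any $(\varphi,\psi)$ in $\st^+_{(x,y)}(\C\times_\B\C)$, I must show $\psi=g\varphi$. Both $\psi$ and $g\varphi$ are morphisms starting at $y=gx$ with $F\psi=F\varphi=F(g\varphi)$; since $F$ restricts to a $k$-isomorphism on the star at $y$ (Definition \ref{covering}), injectivity there forces $\psi=g\varphi$. This is the step that requires care, because a priori the target of $\psi$ is only some object $y'$ with $F(y')=F(x')$, hence $y'=hx'$ for some $h\in G$; comparing targets after applying the star isomorphism and invoking freeness of the action pins down $h=g$, so $y'=gx'$ and the two summands actually coincide. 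The same argument applied to incoming morphisms handles the target star. Combining the two cases, $\left(\mathsf{pr}_\C\right)_x^{(x,y)}$ is a $k$-isomorphism for every object $(x,y)$, which together with surjectivity on objects shows that $\mathsf{pr}_\C$ is a covering of $\C$.
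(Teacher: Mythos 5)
Your proof is correct and follows essentially the same route as the paper: object-surjectivity via the diagonal, the automorphism $g\in\aut_1(F)$ with $gx=y$ supplied by the Galois hypothesis, the map $f\mapsto (f,gf)$ as the inverse on stars, and injectivity reduced to the star isomorphisms of $F$. The only difference is presentational — the paper establishes injectivity by repeating the index-grouping argument from the proof of Theorem \ref{fullyfaithful}, whereas you show directly that every morphism $(\varphi,\psi)$ out of $(x,y)$ must equal $(\varphi,g\varphi)$, so that your section is a two-sided inverse.
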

\begin{proof} It is clear that  ${\mathsf{pr}_\C}$ is surjective on objects since
${\mathsf{pr}_\C}(c,c)=c$. We must prove that
$$\left({\mathsf{pr}_\C}\right)_{c}^{(c,c')}: \st_{(c,c')} \C \times _{\B} \C \to \st_{c} \C$$
is an isomorphism, where $F(c)=F(c')$.  Using that $F$ is Galois we deduce that there
exists $g \in Aut(F)$ such that $gc=c'$.  The surjectivity on morphisms is also immediate
since $(f,gf) \in \st_{(c,c')} \C \times _{\B} \C $ and ${\mathsf{pr}_\C}(f,gf)=f$. The
proof of the injectivity is the same as in the proof of Theorem \ref{fullyfaithful}.
\end{proof}

\begin{defi}
A \emph{trivial covering} of a $k$-category $\B$ is a covering which is isomorphic to the
one given by the product $\B\times E$ of $\B$ by a set $E$, with objects $\B_0\times E$
and where the morphisms are ${}_{(y,e)}(\B\times E)_{(x,e)}= {}_y\B_x$ while
${}_{(y,f)}(\B\times E)_{(x,e)} =0$ if $e\neq f$. The covering functor is the projection
functor to the first factor.
\end{defi}

\begin{lem}
Let $F:\C \longrightarrow \B$ be a covering of $k$-categories, where $\B$ is connected.
Let $S$ be a section of $F$, namely a $k$-functor $\B\longrightarrow \C$ such that
$FS=1_\B$. Then the subcategory $S\B$ is full and it is a connected component of $\C$.
\end{lem}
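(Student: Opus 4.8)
The plan is to isolate a single computation forced by the covering hypothesis and then read off all three assertions (fullness, connectedness, and closedness) from it.

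First I would record the elementary consequences of $FS=1_\B$: since $FSb=b$ the functor $S$ is injective on objects, and since $FS\varphi=\varphi$ it is faithful, so $S$ carries non-zero morphisms to non-zero morphisms. The crucial point is the following. Fix $b\in\B_0$ and put $x=Sb$; then $x$ lies in $F^{-1}(b)$ and the covering hypothesis provides the star isomorphism $F_b^x:\st_x\C\to\st_b\B$. Applying $S$ summand by summand to a star yields a $k$-linear map $\st_b\B\to\st_{Sb}\C$ sending ${}_y\B_b$ into ${}_{Sy}\C_{Sb}$ and ${}_b\B_y$ into ${}_{Sb}\C_{Sy}$, and $FS=1_\B$ gives $F_b^x\circ S=\mathrm{id}_{\st_b\B}$. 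As $F_b^x$ is invertible, a right inverse is the inverse, so $S$ is a two-sided inverse on stars; concretely, every $\gamma\in\st_{Sb}\C$ satisfies $\gamma=S(F\gamma)$.

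Fullness is then immediate: for a morphism $\gamma\in{}_{Sc}\C_{Sb}$, regarded inside $\st_{Sb}\C$, the identity $\gamma=S(F\gamma)$ with $F\gamma\in{}_c\B_b$ shows ${}_{Sc}\C_{Sb}=S({}_c\B_b)$, so $S\B$ is full. For connectedness of $S\B$ I would transport a non-zero walk of the connected category $\B$ joining $b$ to $c$ through the functor $S$: it becomes a walk joining $Sb$ to $Sc$ lying inside $S\B$, still non-zero because $S$ is faithful. Finally, to see that $S\B$ is an entire connected component I would show that no non-zero morphism of $\C$ can leave $S\B$. If $\gamma\neq 0$ joins $Sb$ to some object $z$, then $\gamma$ lies in $\st_{Sb}\C$ and equals $S(F\gamma)$; but $S(F\gamma)$ sits in the summand of the star indexed by $S(Fz)$ while $\gamma$ sits in the summand indexed by $z$, so the non-vanishing of $\gamma$ forces $z=S(Fz)\in S(\B_0)$. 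Iterating along a walk shows that every object of $\C$ joined to $S\B$ already belongs to $S(\B_0)$, and since $S\B$ is connected and full it is precisely that component.

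I expect the last step to demand the most care: one must treat morphisms out of and into $Sb$ on the same footing, i.e. the source and target parts $\st^-_{Sb}\C$ and $\st^+_{Sb}\C$, and argue correctly that in the direct-sum decomposition of $\st_{Sb}\C$ the equality $\gamma=S(F\gamma)$ of two non-zero elements can hold only when they lie in the same summand, which is exactly the identification $z=S(Fz)$ that returns $z$ to the image of $S$.
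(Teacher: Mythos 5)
Your proposal is correct and takes essentially the same route as the paper's proof: both hinge on the observation that $FS=1_\B$ forces $S$ to be the two-sided inverse of the star isomorphism $F_b^{Sb}$ (a right inverse of an isomorphism being the inverse), so that $\gamma = S(F\gamma)$ for every $\gamma\in\st_{Sb}\C$, which yields fullness and the connected-component property simultaneously, with connectedness of $S\B$ coming from $S\B\cong\B$. Your explicit summand-matching step showing $z=S(Fz)$ merely spells out a detail the paper leaves implicit in its assertion $y=SFy$.
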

\begin{proof} Let $f$ be a non-zero morphism of $\C$ having one
extreme object in $S\B$. Let $x \in S\B_0$ and let $f \in {}_y\C_{x}$. Let $b \in
\B_0$ be such that $x =Sb$. In order to prove that $y$ is also in the image of $S$,
consider $\st_{x}\C$ which is isomorphic through $F$ to $\st_{b}\B$. Since $FS=Id$,
we have that $S:\st_{b}\B\longrightarrow \st_{x}\C$ is the inverse of $F$ at the star
level. Hence $f=SFf$ and $y=SFy$. We have shown that a morphism between objects in the
image of $S$ is in the image of $S$, which shows that $S\B$ is full in $\C$. Since $S\B$
is isomorphic to $\B$, the category $S\B$ is connected.
\end{proof}

\begin{pro}\label{enough}
Let $F:\C \longrightarrow \B$ be a covering of $k$-categories, where $\B$ is connected.
The covering is trivial if and only if for each object $x\in\C_0$ there exists a
$k$-linear functor $S : \B\longrightarrow \C$ such that $FS$ is the identity functor of
$\B$ and $SFx=x$.
\end{pro}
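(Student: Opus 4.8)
The plan is to prove the two implications separately; the forward implication is immediate, and essentially all the work lies in the converse. For the direct implication, suppose $F$ is trivial. Up to an isomorphism of coverings we may assume $\C=\B\times E$ with $F$ the projection onto the first factor. Given $x\in\C_0$, write $x=(b_0,e_0)$ and let $S$ send each object $b$ to $(b,e_0)$ and each morphism of $\B$ to its copy in the $e_0$-th factor. Then $FS=1_\B$ and $SFx=S(b_0)=(b_0,e_0)=x$, and transporting $S$ through the trivializing isomorphism yields the desired section through $x$ in $\C$.

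For the converse, assume that for every $x\in\C_0$ there is a section $S_x$ with $S_xFx=x$. The starting point is the preceding Lemma: each $S_x\B$ is a full connected component of $\C$, and since $x=S_x(Fx)$ it contains $x$; hence $S_x\B$ is exactly the connected component of $x$. Moreover $FS_x=1_\B$ and, by the argument of that Lemma, $f=S_xFf$ for every morphism $f$ inside $S_x\B$, so the restriction of $F$ to this component is an isomorphism of $k$-categories onto $\B$ whose inverse is $S_x$. The conclusion of this step is that $F$ restricts to an isomorphism on every connected component of $\C$.

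To assemble the trivialization, let $E$ denote the set of connected components of $\C$. A non-zero morphism is a non-zero walk of length one between its endpoints, so two objects lying in distinct components admit only the zero morphism between them, while inside a single component $F$ is an isomorphism onto $\B$. I would then define $\Phi\colon\C\to\B\times E$ by $\Phi(y)=(Fy,e)$ for $y$ in the component $e$, and on ${}_{y'}\C_y$ by applying $F$ and placing the result in the $e$-th copy when $y,y'$ share the component $e$ (both hom-spaces being zero otherwise). One checks that $\Phi$ is bijective on objects ($F$ being injective on each component, and $S_x(b)$ realizing any pair $(b,e)$), that it is $k$-linear and bijective on each morphism space, and that $\mathrm{pr}_1\Phi=F$; thus $F$ is trivial.

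The hard part is the converse, and within it the recognition that the images $S_x\B$ are precisely the connected components, together with the vanishing of morphisms across distinct components. Once these two facts are in place, the restriction of $F$ to each component is an isomorphism and the global map $\Phi$ assembles formally; the only remaining care is to verify the $k$-linearity and functoriality of $\Phi$ across the zero hom-spaces, which is routine.
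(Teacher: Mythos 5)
Your proof is correct and takes essentially the same approach as the paper: both rest on the preceding lemma to recognize the images $S_x\B$ as full connected components of $\C$ which, by the hypothesis, exhaust all of $\C_0$, so that $F$ restricts to an isomorphism on each component. The only cosmetic difference is direction and indexing --- the paper assembles an isomorphism $\alpha:\B\times\Sigma\to\C$, $\alpha(b,S)=Sb$, indexed by the set $\Sigma$ of sections, whereas you construct its inverse $\Phi:\C\to\B\times E$ indexed by the set $E$ of connected components; these two index sets are in canonical bijection, so the arguments coincide.
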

\begin{proof} Assume that for each $x\in\C_0$ there exists a $k$-linear functor $S :
\B\longrightarrow \C$ such that $FS$ is the identity functor of $\B$ and $SFx=x$.  Let
$\Sigma$ be the set of all these sections of $F$ and consider the functor $\alpha: \B\times
\Sigma\longrightarrow \C$ given by $\alpha(b,S)=Sb$ and
$\alpha\left({}_{(c,S)}f_{(b,S)}\right)=Sf$. This functor is clearly faithful and dense.
It is also full by the previous lemma. The other implication is immediate.
\end{proof}

We already know after Proposition \ref{square} that when $F$ is a Galois covering
the fibre product of $F$ with itself is a covering. In fact, we will now characterize
Galois coverings using fibre products.

\begin{thm}\label{importante1}
A connected covering $F: \C\longrightarrow\B$ is Galois if and only the fibre product of
$F$ with itself is a trivial covering of $\C$ with respect to the projection functor
$\pr_{\C}$ to the first factor.
\end{thm}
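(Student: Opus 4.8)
The plan is to prove both implications through the section criterion for triviality (Proposition \ref{enough}) combined with the rigidity of morphisms between coverings (Proposition \ref{equal}). Throughout I write $\pr_\C$ for the first projection $\C\times_\B\C\to\C$ and $q$ for the second projection. The key preliminary observation I would record is that a $k$-functor $S:\C\to\C\times_\B\C$ with $\pr_\C S=1_\C$ is the same datum as a $k$-functor $g=qS:\C\to\C$ satisfying $Fg=F$: necessarily $S(x)=(x,gx)$ and $S(\varphi)=(\varphi,g\varphi)$, and the requirement that these land in the fibre product is exactly the condition $Fg=F$ on objects and on morphisms. In other words, sections of $\pr_\C$ correspond bijectively to morphisms $(g,1):F\to F$.

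For the forward implication I would assume $F$ is Galois. By Proposition \ref{square} the projection $\pr_\C$ is a covering, and since $\C$ is connected Proposition \ref{enough} is available. Given an object $(c,c')$ of $\C\times_\B\C$ one has $F(c)=F(c')$, and transitivity of the action of $\aut_1(F)$ on the fibre furnishes $g\in\aut_1(F)$ with $gc=c'$. The associated section $S_g(x)=(x,gx)$ then satisfies $S_g\pr_\C(c,c')=S_g(c)=(c,gc)=(c,c')$. Thus the hypothesis of Proposition \ref{enough} holds for every object, so $\pr_\C$ is trivial.

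For the converse I would assume $\pr_\C$ is a trivial covering and take $c,c'$ in the same $F$-fibre, so that $(c,c')$ is an object of the fibre product. Proposition \ref{enough} produces a section $S$ of $\pr_\C$ with $S(c)=(c,c')$, which by the correspondence above is an endofunctor $g=qS$ of $\C$ with $Fg=F$ and $gc=c'$; applying the same to $(c',c)$ yields $g'$ with $Fg'=F$ and $g'c'=c$. The crucial step is then to show $g$ is invertible, for which I would invoke Proposition \ref{equal}: the pairs $(g'g,1)$ and $(1_\C,1)$ are both morphisms from $F$ to $F$ over the connected category $\C$, and they agree on $c$ since $g'g(c)=g'(c')=c$, whence $g'g=1_\C$; symmetrically $gg'(c')=g(c)=c'$ gives $gg'=1_\C$. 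Therefore $g\in\aut_1(F)$ and $gc=c'$, so $\aut_1(F)$ acts transitively on the fibre, and $F$ is Galois.

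The main obstacle is precisely this invertibility in the converse: triviality delivers only sections, equivalently endofunctors $g$ with $Fg=F$, with no a priori control on their invertibility. What upgrades the pair $g,g'$ into mutually inverse automorphisms is the rigidity of Proposition \ref{equal}, namely that two morphisms of coverings agreeing at a single object over a connected base must coincide. Everything else reduces to the routine checks that the assignments $S_g(x)=(x,gx)$ are genuine $k$-linear functors into $\C\times_\B\C$ and that the condition $Fg=F$ translates correctly between objects and morphisms.
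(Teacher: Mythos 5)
Your proof is correct and follows essentially the same route as the paper: the forward direction via Proposition \ref{square}, the transitivity of $\aut_1(F)$ and the section $S_g(x)=(x,gx)$ feeding into Proposition \ref{enough}, and the converse by extracting the second component of a section through $(c,c')$ and using the rigidity of Proposition \ref{equal} to force invertibility. Your explicit two-sided argument ($g'g$ and $gg'$ each fix an object, hence equal the identity) is just a cleaner write-up of the paper's somewhat terse invertibility step, not a different method.
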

\begin{proof} In case $F$ is Galois, let $x\in\C_0$ and let $(x,x')$ be in the
fibre of $x$ in the fibre product, namely $Fx=Fx'$. By transitivity and freeness of the action, there exists
a unique $g\in\aut (F)$ such that $gx=x'$. Let $S: \C \to \C \times_\B \C$ be the functor defined as
follows:
$Sy=(y,gy)$ on objects and $Sf=(f,gf)$ on morphisms. This functor verifies the hypotheses of the previous proposition, and hence $F$ is trivial.\\
Conversely let $x$ and $x'$ be in the same $F$-fibre, in order to define an automorphism
carrying $x$ to $x'$, let $S$ be the section of the functor of $\pr_1$ through $(x,x')$
obtained using Proposition \ref{enough}. The second component of $S$ will provide the
required automorphism $g$, more precisely $g$ is defined by $Sy=(y,gy)$ and $Sf=(f,gf)$.
Clearly $Fg=F$ by definition of the fibre product. Note also that this proves the
existence of $g$, we already know that there exists at most one endomorphism $g$ of $F$
such that $gx=x'$.  In order to prove that $g$ is invertible, note that for $x'$ and
$x''$ in the same fibre, there exists an endomorphism $h$ of $F$ such that $x'=hx''$. If
$x''=x$, we have $ghx=x$, then $gh=1$.
\end{proof}

We already know that since a universal covering $U$ of $\B$ is Galois, the fibre product
$\U \times_{\B} \U$ is trivial. In fact, we shall see that this is the case for
$\U \times_{\B} \C$, where $\C \to \B$ is any Galois covering of $\B$, and moreover,
this property characterizes universal coverings.

\begin{thm}\label{importante2}
A connected covering $U:\U\longrightarrow\B$ is
universal if and only if the fibre product of $U$ with any Galois covering
$F:\C\longrightarrow\B$ provides a trivial covering of $\U$.
\end{thm}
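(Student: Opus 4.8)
The plan is to prove both directions of the equivalence, working from the definition of universal covering and using Theorem \ref{importante1} together with Proposition \ref{enough} as the main technical tools.

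\medskip

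\noindent\textbf{Forward direction.} Suppose $U:\U\to\B$ is universal and let $F:\C\to\B$ be an arbitrary Galois covering. I would prove that the fibre product $\U\times_\B\C$ is a trivial covering of $\U$ via $\pr_\U$ by producing, for each object of the fibre product, a section in the sense of Proposition \ref{enough}. Fix $(u,c)\in(\U\times_\B\C)_0$, so $U(u)=F(c)$. By the universal property applied to the pair $u\in\U_0$, $c\in\C_0$ with $U(u)=F(c)$, there exists a unique morphism $(H,1)$ from $U$ to $F$ with $H(u)=c$; thus $H:\U\to\C$ satisfies $FH=U$ and $H(u)=c$. The plan is then to define $S:\U\to\U\times_\B\C$ on objects by $Sv=(v,Hv)$ and on morphisms by $S\varphi=(\varphi,H\varphi)$. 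This is well defined into the fibre product precisely because $U\varphi=FH\varphi$, it is a $k$-functor since both components are, and $\pr_\U S=1_\U$. Moreover $SF\pr_\U$ evaluated at $(u,c)$ gives $S(u)=(u,Hu)=(u,c)$, so $S$ fixes the chosen object. Since such a section exists through every object $(u,c)$, Proposition \ref{enough} (applied to the covering $\pr_\U$, using that $\U$ is connected) yields that $\pr_\U$ is trivial. Here I must first confirm that $\pr_\U$ is genuinely a covering: this follows from Theorem \ref{fullyfaithful}, since $F$, being a Galois covering, is in particular full and faithful, and $U$ is a covering.

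\medskip

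\noindent\textbf{Converse direction.} Now suppose that $U\times_\B F$ is a trivial covering of $\U$ for every Galois covering $F:\C\to\B$; I must show $U$ is universal. First, taking $F=U$ shows the fibre product of $U$ with itself is trivial, so by Theorem \ref{importante1} the covering $U$ is Galois. It remains to verify the universal property: given a Galois covering $F:\C\to\B$ and $u\in\U_0$, $c\in\C_0$ with $U(u)=F(c)$, I must produce a unique $(H,1)$ from $U$ to $F$ with $H(u)=c$. The idea is to read off $H$ from the trivialising section of $\pr_\U$ through the object $(u,c)$ of $\U\times_\B\C$: by Proposition \ref{enough} there is a section $S:\U\to\U\times_\B\C$ with $\pr_\U S=1_\U$ and $SU\pr_\U(u,c)=(u,c)$, that is $S(u)=(u,c)$. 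Writing $S=(1_\U,H)$ by composing with the two projections, the second projection $H=\pr_\C S:\U\to\C$ is a $k$-functor satisfying $FH=G\pr_\C S=U\pr_\U S=U$ and $H(u)=c$. Thus $(H,1)$ is a morphism from $U$ to $F$ carrying $u$ to $c$.

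\medskip

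\noindent\textbf{Uniqueness and the main obstacle.} For the uniqueness clause of the universal property, I would invoke Proposition \ref{equal}: since $\U$ is connected, any two morphisms $(H_1,1),(H_2,1)$ from $U$ to $F$ that agree on the single object $u$ must coincide. The step I expect to be most delicate is the converse direction, specifically ensuring that the section $S$ furnished by Proposition \ref{enough} can be chosen to pass through the prescribed object $(u,c)$ and that the resulting $H$ really has first projection equal to $1_\U$ rather than merely some automorphism of $\U$. This is controlled by the triviality hypothesis, which guarantees sections through every object, combined with the fact that $\pr_\U S=1_\U$ forces the first coordinate of $S$ to be the identity functor; the compatibility $FH=U$ is then automatic from the defining relation $G\,\pr_\C=F\,\pr_\U$ of the fibre product. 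Checking that $H$ is a covering morphism in the required sense, and not just an arbitrary functor, is where the bookkeeping between the two projections must be done carefully.
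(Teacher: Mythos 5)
Your construction is essentially the paper's (the section $S=(1_\U,H)$ built from universality in the forward direction, and $H=\pr_\C S$ read off a section in the converse; you even supply the uniqueness argument via Proposition \ref{equal} that the paper leaves implicit), but there is a genuine error where you justify that $\pr_\U$ is a covering. You claim that $F$, being Galois, ``is in particular full and faithful'' and invoke Theorem \ref{fullyfaithful}. Galois coverings are faithful but essentially never full: $F$ identifies $\st_c\C$ with $\st_{Fc}\B$, so a single hom-space ${}_y\C_c$ maps onto only one direct summand of ${}_{Fy}\B_{Fc}$, the complementary summands coming from the other objects of the fibre $F^{-1}(Fy)$. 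Concretely, for $F_1$ of Example \ref{ejemplo} the space ${}_s\B_t=ka\oplus kcb$ is two-dimensional while ${}_{s_{i+1}}\C_{t_i}=ka_i$ is one-dimensional, so $F_1$ is not full. Worse, if your claim were true, Theorem \ref{fullyfaithful} would force the projections of the fibre product in Example \ref{ejemplo2} to be coverings, which the paper shows they are not: the covering property of $\pr_\U$ genuinely uses the universality of $U$ and cannot follow from properties of $F$ alone. The repair is exactly the paper's argument: the section $S$ you constructed already induces a two-sided inverse of the star map $\left(\pr_\U\right)_u^{(u,c)}$, because any $(\varphi,\psi)\in\st_{(u,c)}\U\times_\B\C$ has $H\varphi$ and $\psi$ both in $\st_c\C$ with $F(H\varphi)=U\varphi=F\psi$, hence $\psi=H\varphi$ by injectivity of the star isomorphism $F^{c}_{Fc}$, i.e. $(\varphi,\psi)=S\varphi$. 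With that established, Proposition \ref{enough} applies as you intended.

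In the converse direction, your derivation that $U$ is Galois is circular: the hypothesis quantifies only over Galois coverings $F$, so you may not instantiate it at $F=U$ before knowing that $U$ is Galois, which is precisely what you are trying to prove; Theorem \ref{importante1} cannot be invoked this way. (You did put your finger on a real issue: the paper's own proof of this direction only constructs $G=\pr_\C S$ with $FG=U$ and $Gu=x$, saying nothing about $U$ being Galois or about uniqueness; but your proposed fix is not a valid argument.) The rest of your converse --- existence via $H=\pr_\C S$ and uniqueness via Proposition \ref{equal} using the connectedness of $\U$ --- is correct and agrees with the paper, apart from the typo $FH=G\pr_\C S$, which should read $FH=F\pr_\C S=U\pr_\U S=U$.
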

\begin{proof} In case $U$ is universal, let $(u,x)$ be an object
in $U\times_{\B} \C$, that is $Uu=Fx$. We have to prove that $\pr_{\U}$ is a covering and
that it is trivial. It is clear that  $\pr_{\U}$ is surjective on objects. To see that
$$\left({\pr_{\U}}\right)_u^{(u,x)}: \st_{(u,x)} \U \times_\B \C \to \st_u \U$$
is an isomorphism consider the $k$-linear functor $S: \U \to \U \times_\B \C$ defined as
follows: since $U$ is universal there exists a unique functor $G$ such that $FG=U$ and
$Gu=x$; take $S = Id_\U \times G$. Now this functor $S$ induces an inverse of
$\left({\pr_{\U}}\right)_u^{(u,x)}$.
It is also clear that $S$ is a section of $\pr_{\U}$ through $(u,x)$.\\
Conversely, let $u\in\U_0$ and $x\in\C_0$ such that $Uu=Fx$. In order to define $G$ such
that $FG=U$ and $Gu=x$, let $S$ be a section of $\pr_{\U}$ through $(u,x)$, and define
$G=\pr{_\C}S$. We have $FG=F\pr_{\C}S=U\pr_{U}S=U$.
\end{proof}

As an application we use this criterion in order to prove that the coverings in Example \ref{ejemplo} are not universal.

\begin{exa}
Consider the Galois coverings $F_1, F_2: \C \to \B$ as in Example \ref{ejemplo}.
We have already verified in Example \ref{ejemplo2} the projections from the fibre product $\C \times_\B \C$
to both coordinates are not coverings.
Hence, neither $F_1$ nor $F_2$ is a universal covering.
\end{exa}


\footnotesize \noindent Claude Cibils:\\Institut de Math\'{e}matiques et de Mod\'{e}lisation de Montpellier I3M, UMR 5149\\
Universit\'{e}  Montpellier 2, \\F-34095 Montpellier cedex 5,
France.\\
{\tt Claude.Cibils@math.univ-montp2.fr}\\

\noindent Mar\'\i a Julia Redondo:
\\Departamento de Matem\'atica,
Universidad Nacional del Sur,\\Av. Alem 1253\\8000 Bah\'\i a Blanca,
Argentina.\\ {\tt mredondo@criba.edu.ar} \\

\noindent Andrea Solotar:
\\Departamento de Matem\'atica,
 Facultad de Ciencias Exactas y Naturales,\\
 Universidad de Buenos Aires,
\\Ciudad Universitaria, Pabell\'on 1\\
1428, Buenos Aires, Argentina. \\{\tt asolotar@dm.uba.ar}

\end{document}